\newtheorem{theorem}{Theorem}[section]
\newtheorem{lemma}[theorem]{Lemma}
\newtheorem{remark}[theorem]{Remark}
\numberwithin{equation}{section}
\newcommand{\ZZ}{\mathbb{Z}}
\newcommand{\NN}{\mathbb{N}}
\newcommand{\CC}{\mathbb{C}}
\newcommand{\RR}{\mathbb{R}}
\newcommand{\Ex}{\mathop{\mathbb{E}}}
\newcommand{\dprod}[2]{\left\langle #1,#2\right\rangle}
\newcommand{\proj}[1]{\left|#1\right\rangle\left\langle#1\right|}
\title{Spectral Statistics for one dimensional Anderson model with unbounded but decaying potential}
\author{Anish Mallick\\ \texttt{anishm@imsc.res.in} \and Dhriti Ranjan Dolai \\ \texttt{dhdolai@mat.uc.cl}}
\date{\today}
\begin{document}

\maketitle

\begin{abstract}
\noindent In this work we study the spectral statistics for Anderson model on $\ell^2(\NN)$ with decaying randomness whose single site distribution  has unbounded support.
Here we consider the operator $H^\omega$ given by $(H^\omega u)_n=u_{n+1}+u_{n-1}+a_n\omega_n u_n$, $a_n\sim n^{-\alpha}$ and $\{\omega_n\}$
are real i.i.d random variables following symmetric distribution $\mu$ with fat tail,  i.e $\mu((-R,R)^c)<\frac{C}{R^\delta}$ for $R\gg 1$, for some  constant $C$.
In case of $\alpha-\frac{1}{\delta}>\frac{1}{2}$, we are able to show that the eigenvalue process in $(-2,2)$ is the clock process.
\end{abstract}

\noindent{\bf Keywords:} Local eigenvalue statistics, Clock process, Anderson Model\\
\noindent{\bf AMS 2010 MSC:} 35J10, 82B44, 81Q10,34L20.

\section{Introduction}
We are interested in the eigenvalue statistics for the operator
\begin{equation}\label{opEqn1}
 (H^\omega u)_n=\left\{\begin{matrix}u_{n+1}+u_{n-1}+a_n\omega_nu_n & n>1 \\ u_2+a_1\omega_1u_1 & n=1 \end{matrix}\right.
\end{equation}
on the Hilbert space $\ell^2(\NN)$, where $a_n\sim n^{-\alpha}$ and $\{\omega_n\}_{n\in\NN}$ are independent identically distributed random variables following real symmetric distribution $\mu$ (that is $\int_{-K}^K xd\mu(x)=0$ for any $K>0$) satisfying
\begin{equation}\label{meaCondEq1}
 \mu((-R,R)^c)\leq \frac{C}{R^\delta}\qquad \forall R\gg1
\end{equation}
for some $\delta>0$ and $C>0$. Note that we  are allowing the probability measure to be singular. To study the eigenvalue statistics we will study the random measures $\{\xi^\omega_{E_0,L}\}_L$, defined later at \eqref{meaDef1}, as $L\rightarrow\infty$. In this work we will be restricting to the case $\alpha-\frac{1}{\delta}>\frac{1}{2}$. 
\\\\
\noindent For Anderson tight binding model, eigenvalue statistics was first studied by Molchanov\cite{M1} for one-dimension and
Minami\cite{M2} for higher dimensions. Other similar works includes Germinet-Klopp \cite{GK}, Geisinger \cite{G1}, Dolai-Krishna \cite{DK2}.
In above mentioned works Poisson statistics was shown in pure point regime of the spectrum. The class of models described by \eqref{opEqn1} are not ergodic,
but recently many results are obtained in non-ergodic cases.
Killip-Stoiciu\cite{KS} studied eigenvalue statistics for CMV matrices (which are unitary operators) and showed that i) for $\alpha>\frac{1}{2}$, when randomness decays rapidly, the eigenvalue process is clock, i.e the gaps between consecutive eigenvalues are fixed, ii) for $\alpha=\frac{1}{2}$ the process is $\beta$-ensemble, this process normally appears in random matrix theory, and iii) for $\alpha<\frac{1}{2}$ it is Poisson. 
In their work, they assumed that the variance of the randomness is finite. 
It is also possible  to study the eigenvalue statistics in the case when randomness is increasing. 
 Dolai-Mallick \cite{DA1} showed that the statistics is Poisson for Anderson model over $\ZZ^d$ when single site potential is of the form $\{|n|^\alpha\omega_n\}_{n\in\ZZ^d}$ for $\alpha>2$. 
 Dolai-Krishna \cite{DK1} showed level repulsion for higher dimensional model with decaying randomness.
\\\\
\noindent CMV matrices are representation of unitary operators and so it is expected that this trend would also hold even for one-dimensional Anderson like models. 
One such work is of Avila-Last-Simon \cite{ALS} where they showed quasi-clock behavior for ergodic Jacobi operator in region of absolutely continuous spectrum.
In \cite{KVV} Krichevski-Valk\'{o}-Vir\'{a}g showed the Sine-$\beta$ process for $\alpha=\frac{1}{2}$, when second moment exists. 
Similar work in continuous analogue are done by Kotani \cite{K1}, where gap between eigenvalue is studied and limit laws is obtained for $\alpha>\frac{1}{2}$. 
Kotani-Nakano\cite{KN} and Nakano\cite{NF} showed that the statistics for $\alpha>\frac{1}{2}$ is clock, and for $\alpha=\frac{1}{2}$ it is circular $\beta$-ensemble. 
In these works, the randomness is via Brownian motion on a compact manifold and the decay is implemented as multiplication to the randomness.
In \cite{KQ} Kotani-Quoc studied the distribution of individual eigenvalue in continuous case when the potential is compound Poisson process. \\

\noindent In this work we look at the eigenvalue statistics inside the essential spectrum of $H^\omega$ for $\alpha-\frac{1}{\delta}>\frac{1}{2}$.
When the measure $\mu$ is absolutely continuous, Delyon-Simon-Souillard \cite{DSS} showed that for $\alpha\delta>1$, the essential spectrum of $H^\omega$ is $[-2,2]$,
they also showed that in the case of $\alpha>\frac{1}{2}$ and $\delta>2$ the spectrum is continuous in that region. 
Using a similar arguments as in \cite{DSS} (see remark \ref{remEssSpec}) we can show that the essential spectrum for $H^\omega$ is $[-2,2]$ for any $\mu$ symmetric and following the bound \eqref{meaCondEq1}. \\
Here we study the eigenvalue statistics in the region $(-2,2)$. For $E_0\in\RR$ the sequence of measures $\{\xi^\omega_{L,E_0}\}_L$ is defined by
\begin{equation}\label{meaDef1}
\xi^\omega_{E_0,L}(f)=tr(f(L(H^\omega_L-E_0)))~~\forall f\in C_c(\RR)
\end{equation}
where $H^\omega_L$ is the restriction of $H^\omega$ onto the subspace $\ell^2(\{1,\cdots,L\})$.\\ ~\\
As a side note we can also look at the following operator
$$(\tilde{H}^\omega_L u)_n=\left\{\begin{matrix}u_2+\frac{\omega_1}{L^\alpha} u_1 & n=1\\ u_{n+1}+u_{n-1}+\frac{\omega_n}{L^\alpha} u_n & n\neq 1,L\\ u_{L-1}+\frac{\omega_L}{L^\alpha} u_L & n=L \end{matrix}\right.$$
on $\ell^2(\{1,\cdots,L\})$ and work with the point process
\begin{equation}\label{meaDef2}
\tilde{\xi}^\omega_{E_0,L}(f)=tr(f(L(\tilde{H}^\omega_L-E_0)))\qquad\forall f\in C_c(\RR),
\end{equation}
where $\{\omega_n\}_{n\in\NN}$ are i.i.d random variable following the distribution $\mu$.
The results and the technique of proof are very similar and so we will place the differences in remarks.
\\
The main result of this work is the following:
\begin{theorem}\label{mainThm}
On the Hilbert space $\ell^2(\NN)$ consider the random operator $H^\omega$ defined by \eqref{opEqn1} for $\alpha-\frac{1}{\delta}>\frac{1}{2}$, and set  $E_0\in(-2,2)$.
Then given any increasing sequence $\{L_n\}_{n\in\NN}$ in $\NN$, there exists a subsequence $\{L_{n_k}\}_k$ such that the point process $\{\xi^{\omega}_{E_0,L_{n_k}}\}_k$ converges to the clock process in distribution.
\end{theorem}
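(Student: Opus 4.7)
The plan is to work at the level of Prüfer variables for the transfer matrices of $H^\omega$ at energies near $E_0$, truncate the fat-tailed randomness at a scale $n^\gamma$, and exploit the martingale structure produced by the symmetry of $\mu$ to obtain almost sure convergence of the Prüfer phase. Writing $E_0=2\cos k_0$ with $k_0\in(0,\pi)$, the first step is to introduce modified Prüfer variables $(R_n(E),\theta_n(E))$ for the formal eigenvalue equation $H^\omega u=Eu$; standard calculus (as in Kiselev--Last--Simon or Killip--Stoiciu) gives a recursion
\[
\theta_{n+1}^\omega(E)=\theta_n^\omega(E)+k(E)+\phi(a_n\omega_n,\theta_n^\omega(E),E),\qquad k(E_0)=k_0,
\]
with $\phi$ smooth, $|\phi(v,\theta,E)|\leq C|v|$, and $\phi(v,\theta,E)=v\,\psi_1(\theta,E)+O(v^2)$ for bounded $\psi_1$. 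Dirichlet eigenvalues of $H_L^\omega$ correspond to those $E$ for which $\theta_L^\omega(E)\equiv c\pmod\pi$ for a fixed $c$, so proving convergence of $\xi_{E_0,L}^\omega$ to a clock process reduces to controlling $\theta_L^\omega(E_0+x/L)$ as a function of $x\in\RR$.

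To tame the fat tails, use the hypothesis $\alpha-1/\delta>1/2$ to choose $\gamma$ with $1/\delta<\gamma<\alpha-1/2$ and set $\tilde\omega_n=\omega_n\mathbf{1}_{|\omega_n|\leq n^\gamma}$. Because $\sum_n\mu((-n^\gamma,n^\gamma)^c)\leq C\sum_n n^{-\gamma\delta}<\infty$, Borel--Cantelli yields $\omega_n=\tilde\omega_n$ for all large $n$ almost surely. Thus $H^\omega$ and the truncated operator $\tilde H^\omega$ agree outside a finite (random) set of sites, so they differ by a finite-rank perturbation uniformly in $L$; by interlacing this merely permutes finitely many labels in the scaled eigenvalue process and is absorbed by the clock limit.

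For $\tilde H^\omega$, using $\Ex[\tilde\omega_n]=0$ (symmetry of $\mu$), split
\[
\tilde\theta_L(E_0)-Lk_0=\sum_{n=1}^{L-1}a_n\tilde\omega_n\psi_1(\tilde\theta_n(E_0),E_0)+\sum_{n=1}^{L-1}O\bigl(a_n^2\tilde\omega_n^2\bigr);
\]
the first sum is a square-integrable martingale with quadratic variation bounded by $C\sum a_n^2\Ex[\tilde\omega_n^2]\leq C\sum n^{-2(\alpha-\gamma)}<\infty$, since $\alpha-\gamma>1/2$, and the error sum is absolutely summable for the same reason. Hence $\tilde\theta_L(E_0)$ modulo $\pi$ is a.s.\ subsequentially convergent; applying the same estimates to the recursion for $\partial_E\tilde\theta_L(E)$ yields $L^{-1}\partial_E\tilde\theta_L(E_0)\to 1/\sin k_0$ almost surely, and therefore $\tilde\theta_L(E_0+x/L)-\tilde\theta_L(E_0)\to x/\sin k_0$ locally uniformly in $x$. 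Given a sequence $\{L_n\}$, extract a subsequence along which $\tilde\theta_{L_{n_k}}(E_0)\bmod\pi$ converges to some $\theta_\infty\in[0,\pi)$ (possible by compactness of the circle); then the scaled eigenvalues of $\tilde H_{L_{n_k}}^\omega$ near $E_0$ converge to the clock points $\{(c-\theta_\infty+\pi j)\sin k_0:j\in\ZZ\}$, identifying the limit of $\xi_{E_0,L_{n_k}}^\omega$ as a clock process.

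The main obstacle is the combination of non-linearity of the Prüfer recursion in $\theta_n$ with the fat tails of $\mu$: the leading-order ``noise'' $a_n\tilde\omega_n\psi_1(\tilde\theta_n,E_0)$ depends on the entire past through $\tilde\theta_n$, so Kolmogorov's three-series theorem cannot be used directly and one must route everything through martingale convergence. The admissible window $\gamma\in(1/\delta,\alpha-1/2)$ is non-empty precisely under $\alpha-1/\delta>1/2$, and this is what lets one simultaneously make the truncation events finite almost surely (via $\gamma\delta>1$) and make the martingale quadratic variation summable (via $2(\alpha-\gamma)>1$); this balance, together with a uniform-in-$x$ Prüfer analysis at energies $E_0+x/L$ on bounded $x$-intervals, is the technical heart of the argument.
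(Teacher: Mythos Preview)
Your overall strategy coincides with the paper's: Pr\"ufer variables, a Borel--Cantelli truncation at scale $n^\gamma$ with $1/\delta<\gamma<\alpha-1/2$ (the paper writes $\beta=\alpha-\gamma\in(1/2,\alpha-1/\delta)$ and works on the events $B_N=\{|a_n\omega_n|<n^{-\beta}|\sin\theta|\ \forall n\geq N\}$), and the symmetry of $\mu$ to obtain a zero-mean structure in the leading phase increment. The paper extracts convergence \emph{in probability} of $\theta_L^\omega(E_0)-Lk_0$ from an $L^2$ estimate on $B_N$ (its Lemma~\ref{lem6}) plus Chebyshev, whereas your martingale-convergence route would yield almost sure convergence, a mild strengthening. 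The subsequence is likewise extracted deterministically in the paper so that $\{(L_{n_k}+1)\theta/\pi\}$ converges; you should make explicit that your subsequence is chosen the same way (otherwise ``extract a subsequence along which $\tilde\theta_{L_{n_k}}(E_0)\bmod\pi$ converges'' reads as $\omega$-dependent).

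There is, however, a genuine gap in the passage from $\tilde H^\omega$ back to $H^\omega$. Eigenvalue interlacing for a rank-$r$ perturbation only pins each eigenvalue of $H_L^\omega$ between the $(i-r)$-th and $(i+r)$-th eigenvalues of $\tilde H_L^\omega$; in the scaled picture one spacing is of order $1$, so this merely localizes each point to an interval of length $O(r)$ and does \emph{not} ``permute finitely many labels'' or preserve the clock limit. The easy fix is to drop the detour entirely and run your martingale argument directly on the Pr\"ufer phase of $H^\omega$: since $\omega_n=\tilde\omega_n$ for all $n>N_\omega$ a.s., the tail $\sum_{n>N_\omega}$ of your decomposition is unchanged and the initial segment is a fixed (random) constant, so $\theta_L^\omega(E_0)-Lk_0$ itself converges. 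This is exactly how the paper proceeds, never introducing a truncated operator. Finally, the locally-uniform-in-$x$ control is more delicate than the derivative sketch you give; the paper handles it (Lemma~\ref{lem4}) by splitting at $N_L=\lceil(\ln L)^\eta\rceil$, using a crude Lipschitz bound (Lemma~\ref{lem3}) on the first $N_L$ steps and the $L^2$ estimate on the remainder, and you will need a comparable device to upgrade pointwise convergence of $\partial_E\theta_L$ to uniform control of $\theta_L(E_0+x/L)-\theta_L(E_0)$ on $|x|<K$.
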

\begin{remark}
The equivalent statement for $\tilde{H}^\omega_L$ would be that for $\alpha-\frac{1}{\delta}>\frac{1}{2}$, 
given any increasing sequence $\{L_n\}_{n\in\NN}$, there exists a sub-sequence $\{L_{n_k}\}_{k}$ such that the point process $\{\tilde{\xi}^\omega_{E_0,L_{n_k}}\}_k$ converges to clock process in distribution.

In the following, we will define Pr\"{u}fer phase for the first model. For $\tilde{H}^\omega_L$, we can replace all the $a_n$ by $L^{-\alpha}$. 
It should also be noted that in lemma \ref{lem3} and \ref{lem5}, we do not actually use what are $\{a_n\}$, and so they hold for the Pr\"{u}fer phase defined for $\tilde{H}^\omega_L$.
\end{remark}
\noindent To study eigenvalue statistics, we work with Pr\"{u}fer phase. For coherence, we provide the derivation to obtain the phase 
function in this setup.\\

\noindent For any $E\in\RR$, there exists a real sequence $u^E:=\{u_n^E\}_n$ such that $H^\omega u^E=E u^E$ by 
looking at the transfer matrix (with $u_0=0$)
$$\left(\begin{matrix}u_{n+1}\\u_n\end{matrix}\right)=\left(\begin{matrix}E-a_n\omega_n & -1 \\ 1 & 0\end{matrix}\right)\left(\begin{matrix}u_n\\ 
u_{n-1}\end{matrix}\right).$$
In case $E\in(-2,2)$, there exists $\theta\in(0,\pi)$ such that $E=2\cos\theta$. Using the transformation
$$\left(\begin{matrix}u_n\\u_{n-1}\end{matrix}\right)=\left(\begin{matrix}\cos n\theta & \sin n\theta \\ 
\cos(n-1)\theta & \sin(n-1)\theta\end{matrix}\right) \left(\begin{matrix}v_n \\ w_n\end{matrix}\right),$$
and setting $w_n+i v_n=r_n e^{i \theta_n}$, we get
$$\theta_{n+1}=\theta_n+\Im\ln\left(1-\frac{a_n\omega_n\sin(\theta_n+n\theta)}{\sin\theta}e^{-i(\theta_n+n\theta)}\right).$$
Here we take the principal branch of logarithm with the branch cut $(-\infty,0)$, so $range(\Im\ln)=(-\pi,\pi)$.
As a result of above transformations, we have
\begin{equation}\label{traEq1}
 \left(\begin{matrix}u_n\\u_{n-1}\end{matrix}\right)=r_n\left(\begin{matrix}\sin(\theta_n+n\theta) \\ \sin(\theta_n+(n-1)\theta)\end{matrix}\right).
\end{equation}
So defining $y^\omega_n(\theta)=\theta_n+n\theta$, and using the fact that $u_0=0$, we have the recursion formula (where $y^\omega_1(\theta)=\theta$)
\begin{align}\label{mainRecEq1}
 y^\omega_n(\theta)&=y^\omega_{n-1}(\theta)+\theta+\Im\ln\left(1-\frac{a_{n-1}\omega_{n-1}\sin y^\omega_{n-1}(\theta)}{\sin\theta}e^{-i y^\omega_{n-1}(\theta)}\right)
\end{align}
and also define $\tilde{y}^\omega_n(\theta)=y^\omega_n(\theta)-n\theta$ (In the case of no randomness, we have $\tilde{y}^\omega_n(\theta)=0$). 
It is clear that $\sin y^\omega_{L+1}(\theta)=0$ for $2\cos\theta\in\sigma(H^\omega_L)$, 
because $u_{L+1}=0$. Therefore to identify the eigenvalues of $H^\omega_L$ inside $(-2,2)$, looking at $\theta$ for which $y^\omega_{L+1}(\theta)\in\pi\ZZ$ is enough.

\noindent Let $\{E^{\omega,L}_i\}_{i=1}^L$ denote the eigenvalues of $H^\omega_L$ (arrange in decreasing order).
Then for any $f\in C_c(\RR)$ fixed, there exists $K>0$ such that $supp(f)\subset[-K,K]$, so one only needs to consider the eigenvalues satisfying
$$L|E^{\omega,L}_i-E_0|<K~~\Rightarrow~~ |E^{\omega,L}_i-E_0|<\frac{K}{L}.$$
Since $E_0\in(-2,2)$, we can assume that all the eigenvalues under consideration
are of the form $E^{\omega,L}_i=2\cos\theta^{\omega,L}_i$ where $\theta^{\omega,L}_i\in(0,\pi)$. 
Set $\theta\in(0,\pi)$ such that $E_0=2\cos\theta$, then
\begin{align}\label{appEq1}
L(E^{\omega,L}_i-E_0)&=2L(\cos\theta^{\omega,L}_i-\cos\theta)\nonumber\\
&=-2L\sin\theta (\theta^{\omega,L}_i-\theta)+O\left(\frac{K^2}{L}\right).
\end{align}
Hence define $x^{\omega,L}_i=L(\theta^{\omega,L}_i-\theta)$ (which are arranged in increasing order).
Most of the work done here to show that $x^{\omega,L}_k$ converges to $k\pi+g^\omega_\theta$ in probability.
\\\\
Here we consider $\mu$ with unbounded support (one can also take $\delta$ to be less than $1$),
we are only able to show convergence in probability. To do this, since $\alpha-\frac{1}{\delta}>\frac{1}{2}$,
there exists a sequence of measurable sets $\{B_N\}_N$ (described in \eqref{defSetEq2}) where after a point all the potential
is decaying fast enough and $\mathbb{P}[B_N]\xrightarrow{N\rightarrow\infty}1$. So limit of $\tilde{y}^\omega_n$
can be computed as element of  $L^2(B_N)$ ($L^2$ is important as seen in lemma \ref{lem6} and cannot be proven for any other exponent).
The construction of sequence $\{B_N\}_N$ is done after lemma \ref{lem5} and in lemma \ref{lem6} the convergence rates in $L^2$ are computed.
Lemma \ref{lem3} and \ref{lem5} are necessary to get one-to-one correspondence between $x_i^{\omega,L}$ and $n_i$ (see \eqref{enumEq1} for correspondence) and
they are completely general (does not depend upon randomness). In lemma \ref{lem4} and \ref{lem2} we get the limit.
Finally we combine the results for the proof of the main theorem.

\noindent If the measure $\mu$ is compactly supported or $\delta>2$, the convergence in lemma \ref{lem6} can be shown to be $L^2$ on $\Omega$ itself
and the theorem can be proved for $\alpha>\frac{1}{2}$.
\section{Result}
Following lemma gives the continuity of $\{y^\omega_n(\cdot)\}_n$. This is done by using the fact that even though $\Im\ln$ is discontinuous, 
the singularity of the $\ln$ is never reached. Final expression \eqref{lem3eq2} is basically Lipschitz continuity statement. 
It can be noted that the neighbourhood where continuity is obtained depends only on $\theta$.
\begin{lemma}\label{lem3}
Given $\theta$, for $N\in\NN$ the function $y^\omega_N(\theta+x)$ is continuous in $x$ in a neighbourhood of $0$ for a.e $\omega$.
\end{lemma}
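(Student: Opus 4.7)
The plan is to proceed by induction on $N$, using the recursion \eqref{mainRecEq1}. The only mechanism by which $y^\omega_N(\theta+x)$ could fail to be continuous in $x$ is the jump of $\Im\ln$ across the branch cut $(-\infty,0]$; all other ingredients of the recursion are smooth in $\theta$. The base case $N=1$ is trivial since $y^\omega_1(\theta+x)=\theta+x$.

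For the inductive step, assuming $y^\omega_{n-1}(\theta+x)$ is continuous in a neighbourhood of $0$, I would examine the argument
\[
z^\omega_{n-1}(x):=1-\frac{a_{n-1}\omega_{n-1}\sin y^\omega_{n-1}(\theta+x)}{\sin(\theta+x)}e^{-iy^\omega_{n-1}(\theta+x)}
\]
of the logarithm in \eqref{mainRecEq1}. This is continuous in $x$ by the inductive hypothesis, continuity of $\sin$, and the fact that $\sin(\theta+x)\neq 0$ for $x$ near $0$ since $\theta\in(0,\pi)$. It then suffices to verify that $z^\omega_{n-1}(x)$ avoids the cut $(-\infty,0]$.

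The key computation is to expand $e^{-iy}=\cos y-i\sin y$ and to observe that
\[
\Im z^\omega_{n-1}(x)=\frac{a_{n-1}\omega_{n-1}\sin^2 y^\omega_{n-1}(\theta+x)}{\sin(\theta+x)}.
\]
This vanishes precisely when $a_{n-1}\omega_{n-1}=0$ or $\sin y^\omega_{n-1}(\theta+x)=0$, and in either case the subtracted term in $z^\omega_{n-1}(x)$ vanishes identically, forcing $z^\omega_{n-1}(x)=1$. Hence $z^\omega_{n-1}(x)$ never meets $(-\infty,0]$; $\Im\ln$ is continuous at it, and so $y^\omega_n(\theta+x)=y^\omega_{n-1}(\theta+x)+(\theta+x)+\Im\ln z^\omega_{n-1}(x)$ is continuous in $x$.

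I do not foresee a substantive obstacle: the cancellation above is automatic and the argument does not require discarding any $\omega$-null set, so the ``for a.e.\ $\omega$'' qualifier appears conservative. The neighbourhood on which continuity holds is controlled purely by the distance of $\theta$ from $\{0,\pi\}$, which is needed only to keep $\sin(\theta+x)$ bounded away from $0$; this matches the comment preceding the lemma that the neighbourhood depends only on $\theta$. Should a quantitative Lipschitz statement such as \eqref{lem3eq2} be wanted, a natural refinement is to differentiate $\Im\ln z^\omega_{n-1}$ in $x$, bound $|\partial_x\Im\ln z|\leq |z|^{-1}|\partial_x z|$ on a uniform region of $\CC\setminus(-\infty,0]$ around $z=1$, and iterate the resulting recursive estimate in $n$.
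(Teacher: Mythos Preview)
Your argument is correct and is a genuinely different route from the paper's. You directly verify that the argument $z^\omega_{n-1}(x)$ of the logarithm in \eqref{mainRecEq1} avoids the cut by the neat observation that $\Im z^\omega_{n-1}=0$ forces $z^\omega_{n-1}=1$. The paper instead factors the argument algebraically,
\[
1-\frac{a_n\omega_n\sin y^\omega_n}{\sin\eta}e^{-iy^\omega_n}
=\Bigl(1-\frac{a_n\omega_n}{a_n\omega_n-2i\sin\eta}e^{-2iy^\omega_n}\Bigr)\Bigl(1+i\frac{a_n\omega_n}{2\sin\eta}\Bigr),
\]
so that the first factor is of the form $1-w$ with $|w|<1$ and the second has real part $1$; both manifestly avoid the cut. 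The payoff of the paper's rewriting is that the estimate $|\ln(1-z)-\ln(1-w)|\le|z-w|/\min\{|1-z|,|1-w|\}$ applies directly and yields the explicit recursive Lipschitz bound \eqref{lem3eq2}, which is what is actually used later in Lemma~\ref{lem4}. Your approach gives continuity more cleanly but leaves the quantitative bound to a sketch; note that in your formulation $|z^\omega_{n-1}|$ is not bounded below uniformly (for large $|a_{n-1}\omega_{n-1}|$ the point can approach $0$), so turning your sketch into \eqref{lem3eq2} still requires an $\omega$-dependent lower bound on $|z^\omega_{n-1}|$ along the relevant segment, which is exactly what the paper's factorization makes transparent.
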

\begin{proof}
Using
\begin{equation*}
|\ln(1-z)-\ln(1-w)|\leq \frac{|z-w|}{\min\{|1-z|,|1-w|\}}
\end{equation*}
for $|z|,|w|<1$, and writing $y^\omega_{n+1}(\cdot)$ as
\begin{align*}
 y^\omega_{n+1}(\eta)&=y^\omega_n(\eta)+\eta+\Im\ln\left(1-\frac{a_n\omega_n}{a_n\omega_n-2i\sin\eta}e^{-2i y^\omega_n(\eta)}\right)\\
 &\qquad+\Im\ln\left(1+i\frac{a_n\omega_n}{2\sin\eta}\right),
\end{align*}
for $|p|,|q|<\gamma$ where $\gamma>0$ is chosen such that that $\min_{|x|<\gamma}|\sin(\theta+x)|>\frac{|\sin\theta|}{2}$, some simple estimation gives
\begin{align*}
&|y^\omega_{n+1}(\theta+p)-y^\omega_{n+1}(\theta+q)|\\
&\qquad\leq \left(3+\frac{2|a_n\omega_n|}{|\sin\theta|}\right)|y^\omega_{n}(\theta+p)-y^\omega_{n}(\theta+q)|\\
&\qquad\qquad+\left(2+\frac{|a_n\omega_n|}{|\sin\theta|}+\frac{4|a_n\omega_n|}{|\sin\theta|^2}\right)|p-q|.
\end{align*}
Using this recursively till $n=1$ and then using $y^\omega_1(\eta)=\eta$, we get
\begin{align}\label{lem3eq2}
&|y^\omega_{N}(\theta+p)-y^\omega_{N}(\theta+q)|\leq\nonumber\\
&\qquad\left[\sum_{n=1}^{N-1}\left(2+\frac{|a_n\omega_n|}{|\sin\theta|}+\frac{4|a_n\omega_n|}{|\sin\theta|^2}\right)\prod_{m=n}^{N-1}\left(3+\frac{2|a_n\omega_n|}{|\sin\theta|}\right)\right] |p-q|.
\end{align}
This completes the proof.

\end{proof}
\noindent The above lemma and properties of Green's function give the next lemma which show that the functions $y^\omega_n$ are increasing.
\begin{lemma}\label{lem5}
The sequence $\{y^\omega_{L+1}(\theta^{\omega,L}_{i})\}_i$ is increasing and is a subset of $\pi\ZZ$.
\end{lemma}
\begin{proof}
We have the recurrence relation (setting $u^{\omega,E}_0=0$ and $u^{\omega,E}_1=1$)
$$u^{\omega,E}_{n+1}=(E-a_n\omega_n)u^{\omega,E}_n-u^{\omega,E}_{n-1}\qquad\forall n>1,$$
which modifies to 
$$w^{\omega,E}_{n+1}=E-a_n\omega_n-\frac{1}{w^{\omega,E}_{n}},$$
where $w^{\omega,E}_n=\frac{u^{\omega,E}_n}{u^{\omega,E}_{n-1}}$. Using \eqref{traEq1}, we get
$$w^{\omega,2\cos\eta}_n=\frac{\sin y^\omega_n(\eta)}{\sin(y^\omega_n(\eta)-\eta)}=\frac{1}{\cos\eta-\sin\eta\cot y^\omega_n(\eta)},$$
and using definition of Green's function 
\begin{equation}\label{lem5eq2}
 \frac{1}{w^{\omega,E}_{L+1}}=\dprod{\delta_L}{(E-H^\omega_L)^{-1}\delta_L}=\sum_k\frac{a^{\omega,L}_k}{E-E_k^{\omega,L}},
\end{equation}
%
where $a^{\omega,L}_k=\left|\dprod{\phi^{\omega,L}_k}{\delta_L}\right|^2$, $\{\phi^{\omega,L}_k\}_k$
are eigenfunctions corresponding to eigenvalues $\{E^{\omega,L}_k\}_k$ of $H^\omega_L$. By definition of $\theta^{\omega,L}_i$ we have
$$w_{L+1}^{\omega,2\cos\theta^{\omega,L}_i}=0=w_{L+1}^{\omega,2\cos\theta^{\omega,L}_{i+1}}.$$
\begin{minipage}{0.65\textwidth}
Let us denote the function $f:(\theta_i^{\omega,L},\theta_{i+1}^{\omega,L})\rightarrow\RR$ defined by $f(\eta)=\frac{1}{w_{L+1}^{\omega,2\cos\eta}}$, 
this is an increasing continuous function (using the properties of last expression in \eqref{lem5eq2} and the fact that $\cos$ is decreasing in 
the concerned region) with 
$$\lim_{\eta\rightarrow (\theta^{\omega,L}_i)^{+}}f(\eta)=-\infty~~\&~~\lim_{\eta\rightarrow (\theta^{\omega,L}_{i+1})^{-}}f(\eta)=\infty.$$
\end{minipage}
\begin{minipage}{0.3\textwidth}
\begin{center}
\includegraphics[width=1.8in,keepaspectratio]{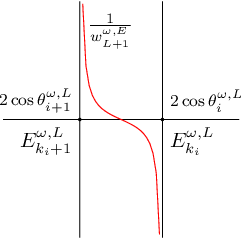} 
\end{center}
\end{minipage}

\noindent Now using the continuity $y^\omega_{L+1}$ and the fact that 
$$\cot y^\omega_{L+1}(\eta)=\frac{\cos\eta-f(\eta)}{\sin\eta}\qquad \forall \eta\in (\theta_i^{\omega,L},\theta_{i+1}^{\omega,L}),$$
observe that
$$y^\omega_{L+1}(\eta)=n^{\omega,L}_i\pi+\cot^{-1}\frac{\cos\eta-f(\eta)}{\sin\eta}~~~ \forall\eta\in(\theta_i^{\omega,L},\theta_{i+1}^{\omega,L}).$$
where $y^\omega_{L+1}(\theta^{\omega,L}_i)=n_i^\omega\pi$. In particular
\begin{equation}\label{lem5eq1}
y^\omega_{L+1}(\theta^{\omega,L}_{i+1})=\lim_{\eta\rightarrow(\theta^{\omega,L}_{i+1})^{-}} y^\omega_{L+1}(\eta)=n^{\omega,L}_i\pi+\pi 
\end{equation}
\end{proof}
\noindent As a consequence of equation \eqref{lem5eq1} and simplicity of spectrum, there exists an enumeration of $\{x_i^{\omega,L}\}_i$ such that
\begin{equation}\label{enumEq1}
 y^{\omega}_{L+1}\left(\theta+\frac{x^{\omega,L}_{i}}{L}\right)=(n^\omega_L+i)\pi
\end{equation}
where $x_0^{\omega,L}$ is defined such that $x^{\omega,L}_{-1}x^{\omega,L}_{1}<0$ (both of them have different sign and none of them are zero). 
From now on we will use this enumeration whenever it is required.\\

\noindent From here we assume that $\alpha-\frac{1}{\delta}>\frac{1}{2}$, so there exists $\beta$ such that $\frac{1}{2}<\beta<\alpha-\frac{1}{\delta}$.
We drop dependence of $\theta$ in notation now onward.

\noindent Define the sequence of sets
\begin{equation}\label{defSetEq1}
\hspace{-0.5cm}A_n:=\{\omega\in\Omega:|a_n\omega_n|<n^{-\beta}|\sin\theta|\}~~~ n\in\NN.
\end{equation}
Using \eqref{meaCondEq1}, we have (choose $N\in\NN$ to be large)
$$\sum_{n\geq 1}\mathbb{P}[A_n^c]\leq N+C|\sin\theta|^\delta\sum_{n\geq N}\frac{1}{n^{\delta(\alpha-\beta)}}<\infty.$$
So using Borel-Canterlli lemma gives $\mathbb{P}[\cap_m\cup_{n\geq m}A_n^c]=0$. So the random variable
\begin{equation}\label{defVarEq1}
n^\omega=\max\{n: |a_n\omega_n|\geq n^{-\beta}|\sin\theta|\}
\end{equation}
is almost everywhere finite, since
$$\{\omega\in\Omega: n^\omega=\infty\}\subseteq \cap_m\cup_{n\geq m}A_n^c.$$
Define the set
\begin{equation}\label{defSetEq2}
B_N=\{\omega\in\Omega: n^\omega< N\},
\end{equation}
and observe that $\mathbb{P}[B_N]\xrightarrow{N\rightarrow\infty}1$. 
\begin{remark}\label{remEssSpec}
 By the definition of the set $B_N$, it should be clear that for any $\omega\in B_N$, only limit point of the sequence $\{a_n \omega_n\}_n$ is zero. So using the fact $\mathbb{P}[B_N]\xrightarrow{N\rightarrow\infty}1$ we have
 \begin{align*}
 &\mathbb{P}[\omega\in\Omega:\text{limit point set of the sequence $\{a_n \omega_n\}_n$ is }\{0\}]=1,
\end{align*}

 so the operator $V^\omega=\sum_{n\in\NN}a_n\omega_n\proj{\delta_n}$ is almost surely compact, which in turn implies $\sigma_{ess}(H^\omega)=\sigma_{ess}(\Delta)=[-2,2]$ almost surely.
\end{remark}
Our analysis will be restricted on the sets $\{B_N\}_N$. We  will follow the notation
$$\Ex_{\omega\in S}[X^\omega]:=\int X^\omega\chi_{S}(\omega)d\mathbb{P}(\omega).$$
where $S\subset\Omega$ measurable and $X^\cdot:\Omega\rightarrow\CC$ is a measurable function.
\begin{lemma}\label{lem6}
For $N$ large enough, there exists $\tilde{C}>0$ such that 
\begin{equation}\label{lem6eq1}
\Ex_{\omega\in B_{N}}\left[\left|\tilde{y}^\omega_M(\theta)-\tilde{y}^\omega_N(\theta)\right|^2\right]\leq \tilde{C} N^{1-2\beta}
\end{equation}
for any $M>N$.
\end{lemma}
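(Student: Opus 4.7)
The plan is to expand the telescoping sum
$$\tilde{y}^\omega_M(\theta)-\tilde{y}^\omega_N(\theta)=\sum_{n=N}^{M-1}\Im\ln(1-z_n),\qquad z_n:=\frac{a_n\omega_n\sin y^\omega_n(\theta)}{\sin\theta}e^{-iy^\omega_n(\theta)},$$
which follows from \eqref{mainRecEq1}. On $B_N$, for every $n\geq N$ the bound $|a_n\omega_n|<n^{-\beta}|\sin\theta|$ forces $|z_n|\leq n^{-\beta}$, which is less than $1$ once $N$ is large. I would then use $\Im\ln(1-z)=-\Im z+r(z)$ with $|r(z)|\leq |z|^2/(2(1-|z|))$ to split the telescoping sum into a linear piece $\sum_n(-\Im z_n)$ and a remainder $\sum_n r(z_n)$. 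The latter is bounded pointwise on $B_N$ by $C\sum_{n\geq N}n^{-2\beta}\leq C'N^{1-2\beta}$, and since $\beta>1/2$ its square is still $\leq C''N^{1-2\beta}$ for large $N$.

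For the linear part I would use $\sin^2 y=(1-\cos 2y)/2$ to write
$$-\Im z_n=\frac{a_n\omega_n}{2\sin\theta}-\frac{a_n\omega_n\cos 2y^\omega_n(\theta)}{2\sin\theta}=:S_n-T_n.$$
The crucial observation is that $\cos 2y^\omega_n(\theta)$, being a function of $\omega_1,\ldots,\omega_{n-1}$ via the recursion, is independent of $\omega_n$. Applying $(a+b+c)^2\leq 3(a^2+b^2+c^2)$ reduces the claim to bounding $\Ex_{B_N}[(\sum S_n)^2]$ and $\Ex_{B_N}[(\sum T_n)^2]$ separately. Since $B_N=\bigcap_{k\geq N}A_k$, the indicator $\chi_{B_N}$ factorises as $\prod_{k\geq N}\chi_{A_k}(\omega_k)$, and each $A_k$ is symmetric in $\omega_k$ because $\mu$ is symmetric. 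Together with the symmetry of $\mu$ this yields $\Ex[\omega_n G\chi_{B_N}]=0$ whenever $n\geq N$ and $G$ is independent of $\omega_n$. Hence all cross terms in both sums vanish, and one is left with diagonal contributions controlled by the pointwise estimate $a_n^2\omega_n^2\leq n^{-2\beta}\sin^2\theta$ on $A_n$, giving $\sum_{n\geq N}Cn^{-2\beta}=O(N^{1-2\beta})$.

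The main subtlety is that $\mu$ can have infinite second moment, so the argument cannot be carried out unconditionally on $\Omega$; the truncation $B_N$ is indispensable and must play two roles at once. It delivers the pointwise bound $|a_n\omega_n|<n^{-\beta}|\sin\theta|$ required both for convergence of the log-expansion and for the decay of the diagonal terms, while its \emph{symmetric} product structure is exactly what preserves the mean-zero property underlying the cross-term cancellations. The freedom to choose $\beta\in(1/2,\alpha-1/\delta)$, enabled by the standing hypothesis $\alpha-1/\delta>1/2$, simultaneously guarantees $\mathbb{P}[B_N]\to 1$ and makes $\sum n^{-2\beta}$ summable with the advertised rate $N^{1-2\beta}$.
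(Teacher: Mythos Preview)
Your argument is correct and follows essentially the same route as the paper: linearize $\Im\ln(1-z_n)$, bound the quadratic remainder pointwise on $B_N$ by $\sum_{n\geq N}n^{-2\beta}=O(N^{1-2\beta})$, and kill the off-diagonal terms in the square of the linear part using that $y^\omega_n$ is independent of $\omega_n$ together with the symmetric product structure $\chi_{B_N}=\prod_{k\geq N}\chi_{A_k}$ and the symmetry of $\mu$. The only cosmetic difference is your further split $\sin^2 y_n=\tfrac12-\tfrac12\cos 2y_n$ into $S_n$ and $T_n$, which is unnecessary since $\sin^2 y^\omega_n$ is already a bounded function of $\omega_1,\dots,\omega_{n-1}$ and the cross terms vanish for the same reason without separating the pieces.
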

\begin{proof}
For $\omega\in B_N$ and $K\geq N$ define
$$Y^\omega_{N,K}(\theta)=\tilde{y}^\omega_N(\theta)+\sum_{n=N}^{K-1} \frac{a_n\omega_n\sin^2 y^\omega_n(\theta)}{\sin\theta}$$
Using $|\ln(1-z)+z|\leq 2|z|^2$ for $|z|<\frac{1}{2}$, we have
\begin{align}\label{lem6eq2}
&\Ex_{\omega\in B_N}\left[\left|\tilde{y}^\omega_M(\theta)-Y^\omega_{N,M}(\theta)\right|^2\right]\\
&\qquad=\Ex_{\omega\in B_N}\left[\left|\sum_{n=N}^{M-1}\left(\Im\ln\left(1-\frac{a_n\omega_n \sin y^\omega_n(\theta)}{\sin\theta}e^{-i y^\omega_n(\theta)}\right)\right.\right.\right.\nonumber\\
&\qquad\qquad\qquad\qquad\qquad\left.\left.\left.-\frac{a_n\omega_n\sin^2 y^\omega_n(\theta)}{\sin\theta}\right)\right|^2\right]\nonumber\\
&\qquad\leq \Ex_{\omega\in B_N}\left[\left(\sum_{n=N}^{M-1}\left|\Im\ln\left(1-\frac{a_n\omega_n \sin y^\omega_n(\theta)}{\sin\theta}e^{-i y^\omega_n(\theta)}\right)\right.\right.\right.\nonumber\\
&\qquad\qquad\qquad\qquad\qquad\left.\left.\left.-\frac{a_n\omega_n\sin^2 y^\omega_n(\theta)}{\sin\theta}\right| \right)^2\right]\nonumber\\
&\qquad\leq 4\Ex_{\omega\in B_N}\left[\left(\sum_{n=N}^{M-1}\left|\frac{a_n\omega_n}{\sin\theta}\right|^2\right)^2\right]\nonumber \\
&\qquad\leq 4\left(\sum_{n=N}^{M-1}\frac{1}{n^{2\beta}}\right)^2\leq C_1 N^{2(1-2\beta)}.
\end{align}
Using $|\sum_i x_i|^2=\sum_{i,j}x_ix_j$ for reals, we have
\begin{align}\label{lem6eq3}
&\Ex_{\omega\in B_N}\left[\left|Y^\omega_{N,M}(\theta)-\tilde{y}^\omega_N(\theta)\right|^2\right]\\
&=\Ex_{\omega\in B_N}\left[\left|\sum_{n=N}^{M-1}\frac{a_n\omega_n\sin^2 y^\omega_n(\theta)}{\sin\theta}\right|^2\right]\nonumber\\
&=\sum_{m,n=N}^{M-1}\Ex_{\omega\in B_N}\left[\frac{a_n\omega_n\sin^2 y^\omega_n(\theta)}{\sin\theta}\frac{a_m\omega_m\sin^2 y^\omega_m(\theta)}
{\sin\theta}\right]\nonumber\\
&=\sum_{n=N}^{M-1}\Ex_{\omega\in B_N}\left[\left(\frac{a_n\omega_n\sin^2 y^\omega_n(\theta)}{\sin\theta}\right)^2\right]\nonumber\\
&~~+\sum_{N\leq m<n<M}2\Ex_{\omega_n\in A_n}[\omega_n]\Ex_{\omega\in B_N}\left[\frac{a_na_m \omega_m \sin^2 y^\omega_n(\theta)\sin^2 y^\omega_m(\theta)}{\sin^2\theta}\right]\nonumber\\
&\leq C_2 N^{1-2\beta}.
\end{align}
Since $y^\omega_n$ is independent of $\omega_n$ and the fact that $\int_{-K}^K xd\mu(x)=0$, we have
\begin{align*}
&\Ex_{\omega\in B_N}\left[\omega_n\omega_m f(y^\omega_n,y^\omega_m)\right]=\\
&\qquad\Ex_{\omega_n\in A_n}[\omega_n]\Ex_{\omega\in B_N}[\omega_m f(y^\omega_n,y^\omega_m)]=0~~\text{for } n>m
\end{align*}

combining \eqref{lem6eq2} and \eqref{lem6eq3} we obtain \eqref{lem6eq1}.
\end{proof}
\begin{remark}
It should if noted that above proof still works if we assume that $\{\omega_n\}_{n\in\NN}$ satisfies
$$\Ex_{\omega\in B_N}[\omega_n X_n^\omega]=0$$
for any $X_n^\omega$ which is a bounded function of $\{\omega_k\}_{k=1}^{n-1}$.

In case $\delta>2$, we can take $\alpha>\frac{1}{2}$ because (define $\tilde{A}_n=\{\omega: |\omega_n|<|a_n||\sin\theta|/2\}$)
$$\sum_{n\geq N}\mathbb{P}[\tilde{A}_n^c]\leq C\sum_{n\geq N}\frac{1}{n^{\alpha\delta}}\xrightarrow{N\rightarrow\infty}0.$$
So $B_N$ is defined as $\{\omega:|\omega_n|<2^{-1}|a_n||\sin\theta|\forall n>N\}$ along with $\beta=\alpha$.
\end{remark}
\noindent Now we can prove the convergence of $\{\tilde{y}^\omega_n(\theta+\frac{x}{L})\}_n$. This is divided in two parts. 
In the following lemma, convergence of $\{\tilde{y}^\omega_n(\theta)\}_n$ is shown. Then in the next lemma, convergence of 
$\{\tilde{y}^\omega_{L+1}(\theta+\frac{x}{L})-\tilde{y}^\omega_{L+1}(\theta)\}_L$ is shown.
\begin{lemma}\label{lem2}
For given $\theta$, there exists $g_\theta^\cdot:\Omega\rightarrow [-\infty,\infty]$ measurable, such that for all $\epsilon>0$
\begin{equation}\label{lem2eq1}
\mathbb{P}[\omega\in\Omega: |\tilde{y}^\omega_L(\theta)-g^\omega_\theta|>\epsilon]\xrightarrow{L\rightarrow\infty}0
\end{equation}
\end{lemma}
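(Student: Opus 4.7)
The plan is to upgrade the $L^2$-estimate of Lemma \ref{lem6} into an actual limit in probability by the standard device of working on the exhausting family $\{B_N\}_N$.

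First, I would observe that from the recursion \eqref{mainRecEq1}, each step $\tilde{y}^\omega_{n+1}(\theta)-\tilde{y}^\omega_n(\theta)$ lies in $(-\pi,\pi)$, so $\tilde{y}^\omega_N(\theta)$ is bounded by $(N-1)\pi$ deterministically; in particular it lies in $L^2(B_N,d\mathbb{P})$. Next, since $B_N\subseteq B_{N'}$ whenever $N\leq N'$, Lemma \ref{lem6} applied on $B_{N'}$ yields, for any $M_1,M_2\geq N'\geq N$,
\begin{equation*}
\|\tilde{y}^\omega_{M_1}(\theta)-\tilde{y}^\omega_{M_2}(\theta)\|_{L^2(B_N)}\leq \|\tilde{y}^\omega_{M_1}(\theta)-\tilde{y}^\omega_{N'}(\theta)\|_{L^2(B_{N'})}+\|\tilde{y}^\omega_{M_2}(\theta)-\tilde{y}^\omega_{N'}(\theta)\|_{L^2(B_{N'})}\leq 2\sqrt{\tilde{C}}\,(N')^{(1-2\beta)/2}.
\end{equation*}
Since $\beta>\frac{1}{2}$, the right hand side tends to $0$ as $N'\to\infty$, so $\{\tilde{y}^\omega_L(\theta)\}_L$ is Cauchy in $L^2(B_N,d\mathbb{P})$ for every $N$, and admits an $L^2(B_N)$-limit $g^\omega_{\theta,N}$.

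The next step is consistency: for $N'>N$, both $g^\omega_{\theta,N}$ and $g^\omega_{\theta,N'}|_{B_N}$ are $L^2(B_N)$-limits of the same sequence, hence they coincide almost surely on $B_N$. Since $\mathbb{P}[\cup_N B_N]=1$ (as noted after \eqref{defVarEq1}), I can unambiguously define a measurable function $g^\omega_\theta$ on $\cup_N B_N$ by $g^\omega_\theta:=g^\omega_{\theta,N}$ whenever $\omega\in B_N$, and extend it arbitrarily (say by $+\infty$) on the remaining null set to get $g^\omega_\theta:\Omega\to[-\infty,\infty]$ measurable.

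Finally, I would pass from $L^2(B_N)$-convergence to convergence in probability by a standard $\epsilon/\eta$ argument. Given $\epsilon>0$ and $\eta>0$, pick $N$ with $\mathbb{P}[B_N^c]<\eta/2$. Then
\begin{equation*}
\mathbb{P}\bigl[|\tilde{y}^\omega_L(\theta)-g^\omega_\theta|>\epsilon\bigr]\leq \mathbb{P}[B_N^c]+\frac{1}{\epsilon^2}\,\Ex_{\omega\in B_N}\bigl[|\tilde{y}^\omega_L(\theta)-g^\omega_\theta|^2\bigr],
\end{equation*}
and the second term tends to $0$ as $L\to\infty$ by the definition of $g^\omega_{\theta,N}$ as the $L^2(B_N)$-limit; hence it is less than $\eta/2$ for all large $L$, proving \eqref{lem2eq1}.

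The only slightly subtle point, and the one I would flag, is the consistency of the definition of $g^\omega_\theta$ across different $N$'s, i.e.\ that the $L^2(B_N)$-limits stitch together into a single a.s.\ defined function. This is what forces the two-step construction (first $g^\omega_{\theta,N}$ on each $B_N$, then their common extension) rather than trying to define $g^\omega_\theta$ directly; once it is in place the rest is a routine truncation argument and Markov's inequality.
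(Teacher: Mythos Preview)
Your argument is correct and uses the same essential ingredients as the paper's proof: the $L^2$-estimate of Lemma~\ref{lem6} on $B_N$, Chebyshev's inequality, and the exhaustion $\mathbb{P}[B_N^c]\to0$. The only difference is packaging: the paper observes directly that
\[
\mathbb{P}\bigl[|\tilde{y}^\omega_M(\theta)-\tilde{y}^\omega_N(\theta)|>\epsilon\bigr]\leq \mathbb{P}[B_N^c]+\frac{\tilde{C}N^{1-2\beta}}{\epsilon^2}\xrightarrow{N\to\infty}0,
\]
so the sequence is Cauchy in probability and the limit $g^\omega_\theta$ exists by completeness of convergence in probability. This avoids your explicit construction of $g^\omega_{\theta,N}$ on each $B_N$ and the consistency/stitching step you flagged as subtle. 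Your route has the mild advantage of identifying $g^\omega_\theta$ concretely as an $L^2(B_N)$-limit (and in particular shows it is finite a.e.), while the paper's route is a line or two shorter; substantively they are the same proof.
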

\begin{proof}
Let $1\ll N<M$, and observe that 
\begin{align*}
&\mathbb{P}[\omega\in\Omega:|\tilde{y}^\omega_M(\theta)-\tilde{y}^\omega_N(\theta)|>\epsilon]\\
&\qquad\leq \mathbb{P}\left[\omega\in B_{N}:|\tilde{y}^\omega_M(\theta)-\tilde{y}^\omega_N(\theta)|>\epsilon\right]+\mathbb{P}[B^c_{N}].
\end{align*}
Using Chebyshev's inequality in \eqref{lem6eq1} we get
$$\mathbb{P}\left[\omega\in B_{N}:|\tilde{y}^\omega_M(\theta)-\tilde{y}^\omega_N(\theta)|>\epsilon\right]\leq \frac{\tilde{C}N^{1-2\beta}}{\epsilon^2},$$
which give us
\begin{align*}
 &\mathbb{P}[\omega\in\Omega:|\tilde{y}^\omega_M(\theta)-\tilde{y}^\omega_N(\theta)|>\epsilon]\\
 &\qquad\qquad\leq \mathbb{P}[B^c_{N}]+ \frac{CN^{1-2\beta}}{\epsilon^2}\xrightarrow{N\rightarrow \infty}0,
\end{align*}

So using the fact that Cauchy convergence in probability implies existence of limit we get the measurable function $g^\omega_\theta$ such that \eqref{lem2eq1} hold.\\
\end{proof}
\begin{remark}\label{remLabel1}
For $\tilde{H}^\omega_L$ we can set 
$$B_L=\{\omega\in\Omega:|\omega_n|\leq L^\beta|\sin\theta|~\forall 1\leq n\leq L\}$$
and we can show $\mathbb{P}[B_L]\xrightarrow{L\rightarrow\infty}1$. Following the steps of the lemma \ref{lem6} for $\{\tilde{y}^\omega_{L,n}(\theta)\}_{n=1}^{L+1}$ defined for $\tilde{H}^\omega_L$ will give
$$\Ex_{\omega\in B_L}[\left|\tilde{y}^\omega_{L,n}(\theta)\right|^2]\leq \tilde{C}L^{1+2\beta-2\alpha}$$
for all $1\leq n\leq L+1$. So lemma \ref{lem2} modifies to 
\begin{equation}\label{remLab1Eq1}
 \mathbb{P}[\omega\in\Omega:|\tilde{y}^\omega_{L,L+1}(\theta)|>\epsilon]\xrightarrow{L\rightarrow\infty}0.
\end{equation}
So the next lemma is trivially true for $\tilde{y}^\omega_{L,L+1}(\theta)$.
\end{remark}
\begin{lemma}\label{lem4}
Given $\theta$ and $K>0$, for $\epsilon>0$ we have
\begin{align}\label{lem4eq1}
\hspace{-1cm}&\lim_{L\rightarrow\infty}\mathbb{P}\left[\omega\in\Omega:\sup_{|x|<K}\left|\tilde{y}^\omega_{L+1}\left(\theta+\frac{x}{L}\right)-\tilde{y}^\omega_{L+1}(\theta)\right|>
\epsilon\right]\nonumber\\
\hspace{-1cm}&\qquad\qquad\qquad\qquad\qquad\qquad=0
\end{align}
\end{lemma}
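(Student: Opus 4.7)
The plan is to combine three ingredients already in hand: the monotonicity of $y^\omega_{L+1}(\cdot)$ (Lemma \ref{lem5}), the Lipschitz bound (Lemma \ref{lem3}), and the $L^2(B_N)$ estimate (Lemma \ref{lem6}). Monotonicity will reduce the supremum over $|x|<K$ to a maximum over finitely many grid points; a three-term decomposition with cutoff at index $N$ will then control each grid-point value, with the short block $n<N$ handled pathwise via Lipschitz and the long tail $n\geq N$ handled probabilistically via the second-moment estimate.

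First I would fix $\epsilon>0$, partition $[-K,K]$ into $M$ equal subintervals with endpoints $x_j=-K+2jK/M$, and sandwich $y^\omega_{L+1}(\theta+x/L)$ between $y^\omega_{L+1}(\theta+x_j/L)$ and $y^\omega_{L+1}(\theta+x_{j+1}/L)$ using Lemma \ref{lem5}. Combined with the deterministic bound $(L+1)(x_{j+1}-x_j)/L\leq 4K/M$ for $L$ large, this yields
\begin{equation*}
\sup_{|x|<K}\left|\tilde y^\omega_{L+1}(\theta+x/L)-\tilde y^\omega_{L+1}(\theta)\right|\leq\max_{0\leq j\leq M}\left|\tilde y^\omega_{L+1}(\theta+x_j/L)-\tilde y^\omega_{L+1}(\theta)\right|+\frac{4K}{M},
\end{equation*}
so choosing $M$ with $4K/M<\epsilon/2$ reduces the task to controlling the maximum over $M+1$ grid values.

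For each grid point I would then write $\tilde y^\omega_{L+1}(\theta+x_j/L)-\tilde y^\omega_{L+1}(\theta)=T_1+T_2+T_3$ with $T_1=\tilde y^\omega_{L+1}(\theta+x_j/L)-\tilde y^\omega_N(\theta+x_j/L)$, $T_2=\tilde y^\omega_N(\theta+x_j/L)-\tilde y^\omega_N(\theta)$, and $T_3=\tilde y^\omega_N(\theta)-\tilde y^\omega_{L+1}(\theta)$. Lemma \ref{lem6} directly gives $\Ex_{\omega\in B_N}[|T_3|^2]\leq\tilde CN^{1-2\beta}$. Writing $T_2=[y^\omega_N(\theta+x_j/L)-y^\omega_N(\theta)]-Nx_j/L$ and applying Lemma \ref{lem3} gives $|T_2|\leq(C_N^\omega+N)K/L$, which vanishes as $L\to\infty$ for fixed $N$ and a.e.\ $\omega$. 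For $T_1$, the idea is to rerun the proof of Lemma \ref{lem6} at the shifted angle $\eta_L=\theta+x_j/L$: once $L$ is large enough that $|\sin\eta_L|>|\sin\theta|/2$, on $B_N$ one still has $|a_n\omega_n/\sin\eta_L|\leq 2n^{-\beta}$ for $n\geq N$, and the cancellation used in \eqref{lem6eq3} is governed only by the symmetry of $\mu$ on the sets $A_n$ (which are defined via $\theta$, independent of $j$), so the same $O(N^{1-2\beta})$ bound for $\Ex_{\omega\in B_N}[|T_1|^2]$ goes through with constants uniform in $j$.

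Chebyshev and a union bound over the $M+1$ grid points then give $\mathbb{P}[B_N\cap\{\max_j|T_1+T_2+T_3|>\epsilon/2\}]\leq C'MN^{1-2\beta}/\epsilon^2+o_L(1)$, where $o_L(1)$ absorbs the $T_2$ contribution. Sending $L\to\infty$, then $N\to\infty$, and combining with $\mathbb{P}[B_N^c]\to 0$ yields \eqref{lem4eq1}. The main obstacle I anticipate is the uniformity in $j$ of the $L^2$ estimate at the shifted angles $\eta_L$; once that uniformity is verified, the monotonicity-based discretization and the pathwise Lipschitz bound combine cleanly with the existing Lemma \ref{lem6}.
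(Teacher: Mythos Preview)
Your argument is correct and takes a genuinely different route from the paper's. The paper works with a single limit: it chooses a cutoff $N_L=\lceil(\ln L)^\eta\rceil$, $0<\eta<1$, growing with $L$, and then needs an auxiliary high-probability event $D_L=\bigcap_{n\leq N_L}\{2|a_n\omega_n|<|\sin\theta|\ln L\}$ to tame the Lipschitz constant from \eqref{lem3eq2} over the first $N_L$ steps (on $D_L$ that constant is $O((\ln L)^{N_L})=o(L)$); the tail $n\geq N_L$ is handled by Lemma~\ref{lem6} and Chebyshev, giving $O(N_L^{1-2\beta})\to0$. You instead keep $N$ fixed, send $L\to\infty$ first so that $T_2$ dies pathwise (since $C_N^\omega$ is a.s.\ finite for fixed $N$), and only afterwards let $N\to\infty$; this two-step limit removes the need for any tuning of $N_L$ and for the set $D_L$ entirely. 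A second difference is your discretization: you use the monotonicity of $y^\omega_{L+1}$ from Lemma~\ref{lem5} to reduce the supremum to a finite maximum before applying the $L^2$ bounds, whereas the paper applies Lemma~\ref{lem6} and Chebyshev directly to the event with the supremum inside, which as written leaves the passage from a pointwise second-moment bound to a uniform-in-$x$ probability bound implicit. Both proofs rest on the same three ingredients (Lemmas~\ref{lem3}, \ref{lem5}, \ref{lem6}); yours is more elementary and makes the uniformity explicit, while the paper's keeps a single limit at the cost of the extra event $D_L$ and the careful choice of $N_L$.
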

\begin{proof}
Set $N_L=\lceil(\ln L)^\eta\rceil$ for some $0<\eta<1$, and we have
\begin{align}\label{lem4eq2}
&\mathbb{P}\left[\omega\in\Omega:\sup_{|x|<K}\left|\tilde{y}^\omega_{L+1}\left(\theta+\frac{x}{L}\right)-\tilde{y}^\omega_{L+1}(\theta)\right|>
\epsilon\right]\nonumber\\
&\leq \mathbb{P}\left[\omega\in B_{N_L}:\sup_{|x|<K}\left|\tilde{y}^\omega_{L+1}\left(\theta+\frac{x}{L}\right)-\tilde{y}^\omega_{L+1}(\theta)\right|>
\epsilon\right]+\mathbb{P}[B_{N_L}^c].
\end{align}
Second part converges to zero by definition of $B_N$ (given at \eqref{defSetEq2}). For the first part, observe that
\begin{align}
\label{set}
&\left\{\omega\in B_{N_L}:\sup_{|x|<K}\left|\tilde{y}^\omega_{L+1}\left(\theta+\frac{x}{L}\right)-\tilde{y}^\omega_{L+1}(\theta)\right|>\epsilon\right\}\nonumber\\
&\qquad\subseteq \left\{\omega\in B_{N_L}:\sup_{|x|<K}\left|\tilde{y}^\omega_{L+1}\left(\theta+\frac{x}{L}\right)-\tilde{y}^\omega_{N_L}\left(\theta+\frac{x}{L}\right)\right|>\frac{\epsilon}{3}\right\}\nonumber\\
&\qquad\qquad\bigcup\left\{\omega\in B_{N_L}: \sup_{|x|<K}\left|\tilde{y}^\omega_{N_L}\left(\theta+\frac{x}{L}\right)-\tilde{y}^\omega_{N_L}(\theta)\right|>\frac{\epsilon}{3}\right\}.
\end{align}
Using lemma \ref{lem6} and Chebyshev's inequality for first set on RHS, we have
\begin{align}\label{lem4eq3}
 \mathbb{P}\left[\omega\in B_{N_L}:\sup_{|x|<K} \left|\tilde{y}^\omega_{L}\left(\theta+\frac{x}{L}\right)- \tilde{y}^\omega_{N_L}\left(\theta+\frac{x}{L}\right)\right|>\frac{\epsilon}{3}\right]\leq \tilde{C}\frac{N_L^{1-2\beta}}{\epsilon^2}.
\end{align}
Next we focus on second set in RHS of (\ref{set}). Let $A_n^L=\{\omega:2|a_n\omega_n|<(\ln L)|\sin\theta|\}$ and observe that
\begin{align}\label{lem4eq4}
\mathbb{P}\left[\bigcup_{n=1}^{N_L}(A_n^L)^c\right]&\leq \sum_{n=1}^{N_L}\mathbb{P}\left[(A_n^L)^c\right]\leq \frac{C}{(\ln L)^\delta |\sin\theta|^\delta}\sum_{n=1}^{N_L} \frac{1}{n^{\alpha\delta}}\nonumber\\
&<\frac{C_2}{(\ln L)^\delta}\xrightarrow{L\rightarrow\infty}0.
\end{align}
Setting $D_L=\cap_{n=1}^{N_L}A^L_n$ and we have $\mathbb{P}[D_L]\xrightarrow{L\rightarrow\infty}1$. For $\omega\in D_L$, using \eqref{lem3eq2} we have
\begin{align*}
&\sup_{|x|<K}\left|y^\omega_{N_L}\left(\theta+\frac{x}{L}\right)-y^\omega_{N_L}(\theta)\right|\\
&\qquad\leq \left[\sum_{n=1}^{N_L-1}\left(2+\frac{|a_n\omega_n|}{|\sin\theta|}+\frac{4|a_n\omega_n|}{|\sin\theta|^2}\right)\prod_{m=n}^{N_L-1}\left(3+\frac{2|a_n\omega_n|}{|\sin\theta|}\right)\right]\frac{K}{L}\\
&\qquad\leq C_3K \frac{(\ln L)^{N_L}}{L}=O\left(e^{(\ln L)^\eta \ln\ln L -\ln L}\right)\xrightarrow{L\rightarrow\infty}0.
\end{align*}
Using above
\begin{align*}
&\mathbb{P}\left[\omega\in B_{N_L}:\sup_{|x|<K}\left|\tilde{y}^\omega_{N_L}\left(\theta+\frac{x}{L}\right)-\tilde{y}^\omega_{N_L}(\theta)\right|>\frac{\epsilon}{3}\right]\\
&\leq \mathbb{P}\left[\omega\in D_L:\sup_{|x|<K}\left|y^\omega_{N_L}\left(\theta+\frac{x}{L}\right)-y^\omega_{N_L}(\theta)+\frac{N_L x}{L}\right|>\frac{\epsilon}{3}\right]+\mathbb{P}[D_L^c]\\
&=\mathbb{P}[D_L^c]
\end{align*}
Combining \eqref{lem4eq2},\eqref{lem4eq3} and \eqref{lem4eq4} we have the result.\\
\end{proof}
\begin{remark}
It should be noted that in case of lemma \ref{lem2} and \ref{lem4}, the convergence is shown for the entire sequence.

The problem that arises in proving clock process for entire sequence is the fact that the limit points of the sequence $\{\{L\frac{\theta}{\pi}\}\}_{L\in\NN}$ ($\{x\}$ is the fractional part of $x$) is all of the interval $[0,1]$ for almost every $\theta\in[0,\pi]$.
\end{remark}
\noindent Before going into proof of main theorem, we choose the subsequence of $\{L_n\}_n$ say $\{L_{n_k}\}_k$ such that 
$$\left\{\frac{(L_{n_k}+1)\theta}{\pi}\right\}\rightarrow a$$ 
for some $a$ which is a limit point of $\{L_n\}_n$.
\section {Proof of main Theorem \ref{mainThm}}
\begin{proof}
Let $f\in C_c(\RR)$ real valued with $supp(f)\subset[-K,K]$. Given $\epsilon>0$, uniform continuity of $f$ implies the existence of $\eta>0$ such that
$$|f(x)-f(y)|<\epsilon\qquad\forall |x-y|<\eta.$$
Combining lemma \ref{lem4} and \ref{lem2} gives
\begin{align*}
 \lim_{L\rightarrow\infty}\mathbb{P}\left[\omega\in\Omega:\sup_{|x|<K}\left|y^\omega_{L+1}\left(\theta+\frac{x}{L}\right)-(L+1)\left(\theta+\frac{x}{L}\right)-g^\omega_\theta\right|>\frac{\eta}{3}\right]=0.
\end{align*}
Since $K$ is fixed, above statement can be modified to
\begin{align*}
\lim_{L\rightarrow\infty}\mathbb{P}\left[\omega\in\Omega:\sup_{|x|<K}\left|y^\omega_{L+1}\left(\theta+\frac{x}{L}\right)-(L+1)\theta-g^\omega_\theta-x\right|>\frac{\eta}{3}\right]=0
\end{align*}
Set 
\begin{align*}
 &\Omega_L=\left\{\omega\in\Omega: \sup_{|x|<K}\left|y^\omega_{L+1}\left(\theta+\frac{x}{L}\right)-(L+1)\theta-g^\omega_\theta-x\right|<\frac{\eta}{3}\right\},
\end{align*}
and, using the enumeration \eqref{enumEq1} resulting from lemma \ref{lem5}, for $\omega\in\Omega_L$ we get
$$|(n^\omega_L+i)\pi-(L+1)\theta-g^\omega_\theta-x_i^{\omega,L}|<\frac{\eta}{3}.$$
Using the minimality of definition for $x_0^{\omega,L}$, we have $|n^\omega_L\pi-(L+1)\theta-g^\omega_\theta|<\pi+\eta$, so set $\phi_\theta^{\omega,L}=n^\omega_L\pi-(L+1)\theta-g^\omega_\theta$, which provides
$$|k\pi+\phi^{\omega,L}_\theta-x_k^{\omega,L}|<\frac{\eta}{3}\qquad \forall \omega\in\Omega_L.$$
For $\omega\in\Omega_L$, using \eqref{appEq1} and above
\begin{align*}
\hspace{-1cm}\left|2L\left(\cos\left(\theta+\frac{x^{\omega,L}_k}{L}\right)-\cos\theta\right)+2(k\pi+\phi^{\omega,L}_\theta)\sin\theta\right|\\
\leq 2|k\pi+\phi^{\omega,L}_\theta-x^{\omega,L}_k|+O\left(\frac{K^2}{L}\right)<\eta.
\end{align*}
So
\begin{align}\label{thmEq1}
&\Ex_\omega\left[\left|e^{i \xi^{\omega}_{E_0,L}(f)}-e^{i\sum_k f(-2(k\pi+\phi^{\omega,L}_\theta)\sin\theta)}\right|\right]\nonumber\\
&\qquad\leq 2\mathbb{P}[\Omega_L^c]+\Ex_{\omega\in\Omega_L}\left[\left|e^{i \sum_k f(L(E^{\omega,L}_k-E_0))}-e^{i\sum_k f(-2(k\pi+\phi^{\omega,L}_\theta)\sin\theta)}\right|\right].
\end{align}
For the second part in the RHS of  \eqref{thmEq1} observe
\begin{align}\label{thmEq3}
&\Ex_{\omega\in\Omega_L}\left[\left|e^{i \sum_k f(L(E^{\omega,L}_k-E_0))}-e^{i\sum_k f(-2(k\pi+\phi^{\omega,L}_\theta)\sin\theta)}\right|\right]\nonumber\\
&\qquad\leq \Ex_{\omega\in\Omega_L}\left[\sum_k\left|f\left(2L\left(\cos\left(\theta+\frac{x^{\omega,L}_k}{L}\right)-\cos\theta\right)\right)\right.\right.\nonumber\\
&\qquad\qquad\qquad\qquad -f(-2(k\pi+\phi^{\omega,L}_\theta)\sin\theta)\bigg|\bigg]\nonumber\\
&\qquad\leq 2\mathbb{P}[\Omega_L^c]+\epsilon\sup_{\omega\in\Omega_L}\#\{i:|x^{\omega,L}_i|<K \}\leq \mathbb{P}[\Omega_L^c]+\frac{2\epsilon K}{\pi}.
\end{align}

Last line follows using 
\begin{align*}
 K>|x^{\omega,L}_p|&>||x^{\omega,L}_p-p\pi-\phi^{\omega,L}_\theta|-|p\pi+\phi^{\omega,L}_\theta||\\
\Rightarrow\qquad &|p\pi+\phi^{\omega,L}_\theta|\leq K+\frac{\eta}{3}\qquad  for~\omega\in\Omega_L\\
\Rightarrow\qquad &\sup_{\omega\in\Omega_L}\#\{i:|x^{\omega,L}_i|<K \}\leq \frac{2K}{\pi}
\end{align*}
for $\omega\in\Omega_L$.
Now using the subsequence $\{L_{n_k}\}_k$, it is clear that $\{\phi^{\omega,L_{n_k}}_\theta\}_k$ converges, so set
\begin{equation}\label{thmEq2}
 \phi^{\omega,L_{n_k}}_\theta\xrightarrow{k\rightarrow\infty}\tilde{g}^\omega_\theta.
\end{equation}
Combining \eqref{thmEq1}, \eqref{thmEq3} and \eqref{thmEq2} gives
$$\lim_{n\rightarrow\infty}\Ex_\omega\left[e^{i \xi^\omega_{E_0,L_n}(f)}\right]=
\Ex_\omega\left[e^{i \sum_k f(-2(k\pi+\tilde{g}^\omega_\theta)\sin\theta)}\right]$$
completing the proof.\\
\end{proof}
\begin{remark}
For the case of $\tilde{H}^\omega_L$, above proof follows exactly by putting $g^\omega_\theta$ to be zero. This is the case because of the equation \eqref{remLab1Eq1} in remark \ref{remLabel1}.
\end{remark}

\subsection*{Acknowledgement}
The authors would like to thank Fumihiko Nakano for helpful comments and suggestions. The author Dhriti R. Dolai is supported by N\'{u}cleo Milenio de F\'{i}sica Mathem\'{a}tica, ICM grant  RC120002. The author Anish Mallick is partially supported by IMSc Project 12-R\&D-IMS-5.01-0106.


\noindent\makebox[\linewidth]{\rule{\linewidth}{0.4pt}}
{\small Anish Mallick\\ \texttt{anishm@imsc.res.in}\\Institute: The Institute of Mathematical Sciences, Chennai, India\\\\
Dhriti Ranjan Dolai\\ \texttt{dhdolai@mat.uc.cl}\\Institute: Pontificia Universidad Cat\'{o}lica de Chile, Santiago de Chile}

\end{document}